\DeclareFontFamily{OT1}{rsfs}{}
\DeclareFontShape{OT1}{rsfs}{n}{it}{<-> rsfs10}{}
\DeclareMathAlphabet{\mathscr}{OT1}{rsfs}{n}{it}
\newtheorem{theorem}{Theorem}[section]
\newtheorem{prop}[theorem]{Proposition}
\theoremstyle{definition} \newtheorem{defin}[theorem]{Definition}}
\theoremstyle{remark} \newtheorem{remark}[theorem]{Remark}
\numberwithin{equation}{section}
\newcommand{\Cbb}{{\mathbb{C}}}
\newcommand{\Pbb}{{\mathbb{P}}}
\newcommand{\Qbb}{{\mathbb{Q}}}
\newcommand{\Tbb}{{\mathbb{T}}}
\newcommand{\Zbb}{{\mathbb{Z}}}
\newcommand{\one}{1\hskip-3.5pt1}
\newcommand{\saf}{\,}
\newcommand{\cF}{{\mathcal{F}}}
\newcommand{\cI}{{\mathcal{I}}}
\newcommand{\cL}{{\mathcal{L}}}
\newcommand{\cO}{{\mathcal{O}}}
\newcommand{\cma}{{c_{\mathrm{Ma}}}}
\newcommand{\cstr}{{c_{\mathrm{str}}}}
\newcommand{\qede}{\hfill $\lrcorner$}
\DeclareMathOperator{\rk}{rk}
\DeclareMathOperator{\Eu}{Eu}
\DeclareMathOperator{\codim}{codim}
\DeclareMathOperator{\IC}{IC}
\DeclareMathOperator{\CC}{CC}
\title{
Trim resolutions, stringy and Mather classes, and
IC characteristic cycles}
\author{Paolo Aluffi}
\address{
Mathematics Department, 
Florida State University,
Tallahassee FL 32306, U.S.A.
}
\email{paluffi@fsu.edu}
\date{\today}
\begin{document}

\begin{abstract}
We introduce {\em trim\/} resolutions of complex algebraic varieties, a strengthening 
of the notion of small resolution. We prove that the characteristic cycle of the intersection 
cohomology sheaf of a variety admitting a trim resolution is irreducible and that for 
such varieties the stringy and Chern-Mather classes coincide.
\end{abstract}

\maketitle


\section{Introduction}\label{intro}
Recall that a proper surjective stratified map $\pi: Y\to S$ of complex algebraic varieties
of the same dimension is {\em small\/} if for all proper strata $Z$ of $S$, $2d(Z)< \codim_Z S$, 
where $d(Z)$ denotes the (common) dimension of the fibers of $\pi$ over points of $Z$. 
We say that $\pi$ is a {\em small resolution\/} if $Y$ is nonsingular and $\pi$ is birational 
and small.

We introduce a strengthening of this condition, which we name `trim'
(Definition~\ref{def:contained}). 
Assume $\pi:Y\to S$ is a proper birational map, with $Y$ nonsingular. 
For $d\ge 0$, denote by $Y_d\subseteq Y$ the locally closed subset along which 
the rank of the differential of $\pi$ is~$d$. We say that $\pi$ is {\em trim\/} if 
$\dim Y_d<d$ for all $d<\dim Y$.

Trim maps are small (Proposition~\ref{prop:consma}).
If $\pi:Y\to S$ is small, and for all strata~$Z$ of~$S$ the restriction $\pi^{-1}(Z)\to Z$ is a smooth 
morphism, then $\pi$ is trim (\S\ref{Furrem}). 
For instance, the standard Abel-Jacobi resolution of the theta divisor of a non-hyperelliptic 
curve (which is small, cf.~\cite{MR1642745}) is trim.

Our main result is the following.

\begin{theorem}\label{thm:mainthm}
Let $S$ be a complex algebraic variety admitting a trim resolution. Then
\begin{itemize}
\item
For $s\in S$, the local Euler obstruction $\Eu_S(s)$ equals the Euler characteristic of
the fiber over $s$ in any trim resolution of $S$.
\item
The {\em stringy\/} Chern class of $S$ equals its Chern-Mather class:  
$\cstr(S)=\cma(S)$.
\item
The characteristic cycle of the intersection cohomology sheaf of $S$ is irreducible.
\end{itemize}
\end{theorem}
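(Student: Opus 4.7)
The plan is to prove items (3), (1), (2) in this order. Recall (Proposition~\ref{prop:consma}) that a trim resolution $\pi:Y\to S$ is small, so that the decomposition theorem identifies $\IC(S)$ with $R\pi_*\Qbb_Y[\dim Y]$: no summands are supported on proper strata of~$S$. Items (3) and (1) can then be read off from this identification.

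For (3), I would compute the characteristic cycle of $R\pi_*\Qbb_Y[\dim Y]$ via the standard Lagrangian correspondence for a proper map. Embedding $S$ (and hence $Y$) in a smooth ambient variety $M$, one has
\[
T^*Y \xleftarrow{\,d\pi^*\,} Y\times_M T^*M \xrightarrow{\,pr\,} T^*M,
\]
and $\CC(R\pi_*\Qbb_Y[\dim Y])=\pm\, pr_*\bigl[(d\pi^*)^{-1}(T^*_Y Y)\bigr]$. Over $y\in Y$ with $\rk(d\pi_y)=d$, the fiber of $(d\pi^*)^{-1}(T^*_Y Y)$ in $Y\times_M T^*M$ is the annihilator of $\mathrm{im}(d\pi_y)$ in $T^*_{\pi(y)}M$, of dimension $\dim M-d$; thus the portion over $Y_d$ has dimension $\dim Y_d+\dim M-d$. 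The trim hypothesis gives $\dim Y_d<d$ for $d<\dim Y$, making this strictly less than $\dim M$. Only the top open stratum $Y_{\dim Y}$---on which $\pi$ is a local isomorphism by Zariski's Main Theorem and birationality---produces a component of dimension $\dim M$, projecting isomorphically onto a dense open of the conormal $T^*_S M$ with multiplicity one. This yields $\CC(\IC(S))=\pm [T^*_S M]$, proving (3).

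For (1), I would apply the microlocal index formula: if $\CC(\cF)=\sum_Z m_Z [T^*_{\overline Z}M]$ then $\chi(\cF)_s=\sum_Z m_Z\,\Eu_{\overline Z}(s)$ up to a uniform sign. Applied to $\cF=\IC(S)\cong R\pi_*\Qbb_Y[\dim Y]$, the irreducibility from (3) collapses the sum to a single term, giving $\chi(\pi^{-1}(s))=\Eu_S(s)$ once the shift on $\Qbb_Y[\dim Y]$ is matched against the sign convention normalizing $\IC(S)$.

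For (2), I would first observe that the trim hypothesis rules out exceptional divisors altogether: any prime divisor $E\subset Y$ contracted by $\pi$ lies in $\bigcup_{d<\dim Y}Y_d$, whose dimension by trimness is at most $\dim Y-2$, contradicting $\dim E=\dim Y-1$. Consequently $K_Y-\pi^*K_S$, a $\Qbb$-divisor supported on the codimension-$\ge 2$ exceptional locus of~$\pi$, vanishes, so $\pi$ is crepant. The standard fact that the stringy Chern class of~$S$ equals $\pi_*(c(TY)\cap[Y])$ for any crepant resolution $\pi$ then applies. Finally, MacPherson's naturality yields
\[
\pi_*(c(TY)\cap[Y]) = c_*(\pi_*\one_Y) = c_*\bigl(s\mapsto\chi(\pi^{-1}(s))\bigr) = c_*(\Eu_S) = \cma(S),
\]
the third equality being exactly (1), and the last being the defining property of the Chern-Mather class.

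The step I expect to be most delicate is invoking the ``crepant $\Rightarrow$ stringy $=$ Chern pushforward'' principle in (2) in a form compatible with the chosen definition of $\cstr$, since a trim resolution is not itself a log resolution and one must verify the formula by refining $\pi$ to a log resolution and checking that the Bernoulli-type contributions of the newly introduced exceptional divisors cancel (they lie over the codimension-$\ge 2$ exceptional locus of~$\pi$). The dimension count in (3) is the geometric heart of the argument, with the trim inequality tailored precisely so that all contributions other than the top-dimensional conormal of~$S$ are pushed strictly below dimension $\dim M$.
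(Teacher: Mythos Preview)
Your argument shares the same geometric core as the paper's---namely the dimension count showing that, under the trim hypothesis, the Lagrangian push-forward of $T^*_YY$ is $T^*_SM$ with no extra components. The paper isolates this as a separate statement (Proposition~\ref{prop:Sabpf}) and deduces all three items from it; you fold it into~(3). The computations are essentially identical.

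Where you differ is in logical order and in the route to~(1) and~(2). The paper proves~(1) first, directly from the fact that $\CC$ is an isomorphism: $f_*(T^*_YY)=T^*_SX$ gives $\CC(f_*\one_Y)=\CC(\Eu_S)$, hence $f_*\one_Y=\Eu_S$. Then~(3) is derived from~(1) by computing the stalk Euler characteristic of $R\pi_*\Qbb_Y[\dim Y]$. You reverse this, proving~(3) first and then invoking the Dubson--Kashiwara microlocal index formula to extract~(1). Both work; the paper's route avoids importing the index formula as an external input, since the needed content is already built into the $\CC$ isomorphism.

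For~(2) there is a genuine discrepancy in scope. The paper sidesteps crepancy entirely: it \emph{adopts} $\pi_*(c(TY)\cap[Y])$ as the definition of $\cstr(S)$ in this setting (see the paragraph following the theorem statement in~\S\ref{intro}), explicitly noting that the usual hypotheses---$S$ normal, $K_S$ $\Qbb$-Cartier, log-terminal---are then not needed. Your Part~B (MacPherson naturality plus~(1)) is exactly the paper's proof of~(2). Your Part~A, the crepancy detour, is correct as far as it goes (the observation that trim forces the exceptional locus to have codimension~$\ge 2$ is valid), but it requires $K_S$ to be $\Qbb$-Cartier to even write $\pi^*K_S$, and you yourself flag the passage from ``$\pi$ crepant'' to ``stringy class equals pushforward'' as delicate. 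Under the paper's conventions this detour is unnecessary and, in the generality stated, is a gap: no assumption on $K_S$ is made.
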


The definition of characteristic cycle will be recalled below (\S\ref{Lagrapf}).
`Stringy' Chern classes were defined in~\cite{MR2183846, MR2304329} 
(see~\cite{MR2280127} for a lean account). If $\pi: Y\to S$ is a crepant resolution, 
$\cstr(S)$ equals the push-forward of the total Chern class of the tangent bundle
of~$Y$; the equality stated in the theorem is
\begin{equation}\label{eq:strMa}
f_*(c(TY)\cap [Y]) = \cma(S)
\end{equation}
in the Chow group of $S$. In general, stringy Chern classes are defined for
normal varieties with a $\Qbb$-Cartier canonical divisor and at worst log-terminal
singularities, and take value in the Chow group with rational coefficients. These
additional stipulations are not needed for the notion considered in this note; we can 
adopt the left-hand side of~\eqref{eq:strMa} as the definition of stringy Chern class
in the (integral) Chow group of $S$, compatibly with the more general definition.
The fact that the class is independent of the choice of trim resolution is
also a consequence of Theorem~\ref{thm:mainthm}.

We note that Theorem~\ref{thm:mainthm} implies that Batyrev's {\em stringy Euler
number\/} (\cite{MR2001j:14018}) of a variety admitting a trim resolution equals 
its Euler characteristic weighted by the local Euler obstruction.

Theorem~\ref{thm:mainthm} is a consequence of considerations concerning Sabbah's
formalism of {\em conical Lagrangian cycles,\/} to which local Euler obstructions
and Chern-Mather classes relate directly (see~\S\ref{Lagrapf}). Concerning intersection 
cohomology, recall that if $\pi:Y \to S$ is a small resolution, then the 
intersection cohomology sheaf of~$S$ is the push-forward of a shift of the constant 
sheaf on~$Y$ (\cite[\S6.2]{MR696691}). We evaluate the corresponding push-forward 
at the level of characteristic cycles after embedding $S$ in a nonsingular variety~$X$. 
More precisely, we prove (Proposition~\ref{prop:Sabpf}) that, for trim resolutions, the 
Lagrangian push-forward of the zero-section~$T^*_YY$ of the cotangent bundle 
$T^* Y$ equals the conormal cycle $T^*_S X$. The theorem follows from this 
more basic observation, as we show in~\S\ref{thmpf}. 

There is considerable interest in conditions implying that the characteristic cycle of
the intersection cohomology sheaf is irreducible. Lusztig (\cite[13.7, p.~414]{MR1088333}) 
expressed the `hope' that this may be the case for Schubert varieties in flag manifolds of 
type A, D, E. A counterexample was constructed by Kashiwara and Saito for type A in 
$F\ell(8)$ (\cite{MR1458969}, \cite{MR1896039}), while it holds in $F\ell(n)$ for $n\le 7$.
Irreducibility is also known for all Schubert 
varieties in the standard Grassmannian (\cite{MR1084458}), and more generally 
for all Schubert varieties in cominuscule Grassmannians of 
types A, D, and E, see~\cite{MR1451256, MihalceaSingh}.

We hope that Theorem~\ref{thm:mainthm} may help in streamlining such verifications. 
For instance, since the Abel-Jacobi resolution of the theta divisor of a non-hyperelliptic 
curve is trim, the irreducibility of the $\IC$ characteristic cycle in this case (first established 
in \cite{MR1642745}) follows directly from~Theorem~\ref{thm:mainthm}.

The connection between the irreducibility of the $\IC$ characteristic cycle and 
the equality of Chern-Mather and stringy Chern classes was pointed out by B.~Jones
in~\cite[Remark 3.3.2]{MR2628830}, ultimately as an application of the microlocal 
index formula of Dubson and Kashiwara. 
We freely borrow ideas from~\cite{MR2628830} 
in~\S\ref{thmpf}.

A condition on fibers of small resolutions is considered in~\cite{graham}, including a proof
that the condition implies the irreducibility of the IC characteristic cycles.\medskip

{\em Acknowledgments.} This work was supported in part by an award from the Simons
Foundation, SFI-MPS-TSM-00013681. The author also thanks David Massey and
Leonardo Mihalcea for useful conversations, and Caltech for hospitality as most of this 
work was carried out.


\section{Lagrangian push-forward}\label{Lagrapf}
Let $X$ be a nonsingular variety and denote by $T^* X$ the cotangent bundle of $X$.
The {\em conormal variety\/} $T^*_W X$ of a closed subvariety $W\subseteq X$ is the 
closure in $T^* X$ of the conormal variety to the nonsingular part $W^\circ$ of $V$: 
$T^*_W X = \overline{T^*_{W^\circ} X}$. The conormal variety of $X$ itself is the
zero-section $T^*_XX$ of the cotangent bundle. All conormal varieties have dimension
$\dim X$; they determine conormal {\em cycles\/} in $Z_{\dim X} T^* X$.
It will also be convenient to take the projective completion of these constructions:
we will denote by $\Tbb^* X$ the projective completion $\Pbb(T^* X\oplus \one)$ 
of $T^* X$ (here, $\Pbb$ denotes the projective bundle of lines) and by~$\Tbb^*_WX$ 
the closure of the conormal variety in $\Tbb^* X$. 

For a nonsingular variety $X$, we denote by $\cL (X)$ the free abelian group of 
{\em conical Lagrangian cycles\/} in the cotangent bundle $T^* X$.
Conormal cycles are conical Lagrangian, and in fact (cf.~\cite[Lemma~3]{MR1063344}) 
$\cL (X)$ may be realized as the free abelian group on conormal cycles. 
For a closed (and possibly singular) subvariety $V\subseteq X$, we denote by
$\cL(V)$ the subgroup of $\cL(X)$ generated by the conormal cycles $T^*_W X$
with $W\subseteq V$. Thus, elements of~$\cL (V)$ may be viewed as finite integer linear 
combinations $\sum_W m_W T^*_WX$ ranging over closed subvarieties $W$ of $V$.
Clearly $\cL(V)$ is isomorphic to the group of algebraic cycles of~$V$, and in particular
it is independent of the ambient nonsingular variety $X$.

Important invariants of $V$ may be expressed directly in terms of Lagrangian cycles 
by means of intersection-theoretic operations, after taking the projective completion. 
As above, realize $V$ as a closed subvariety of a
nonsingular variety $X$; the results will be independent of the choice of $X$. 
Let $\pi: \Tbb^*_V X\to V$ be the natural projection; and let $\cO(1)$ be the 
tautological line bundle on $\Tbb^* X=\Pbb(T^* X\oplus \one)$.
\begin{itemize}
\item[---]
The {\em local Euler obstruction\/} $\Eu_V: V\to \Zbb$ is
\[
\Eu_V(p)= (-1)^{\dim X-\dim V} \int c(\pi^* TX|_V) c(\cO(1))^{-1}\cap s(\pi^{-1}(p), \Tbb^*_V X)
\]
where $s(\pi^{-1}(p), \Tbb^*_V X)$ denotes the {\em Segre class\/} in the sense 
of~\cite[Chapter~4]{85k:14004};
\item[---]
The {\em Chern-Mather class\/} $\cma(V)\in A_*V$ is
\[
\cma(V)=(-1)^{\dim X-\dim V} c(TX|_V)\cap \pi_* \left(c(\cO(1))^{-1}\cap [\Tbb^*_V X]\right)\saf.
\]
\end{itemize}
Equivalent results were established by C.~Sabbah (\cite[1.2.1, 1.2.2]{MR804052}); we also
address the reader to~\cite{MR1063344} and~\cite{MR2002g:14005} for clear treatments of these
formulas. 

Let $\cF(V)$ denote the abelian group of constructible functions $V\to \Zbb$.
Following~\cite[\S1]{MR2002g:14005}, the relation between conormal cycles and local
Euler obstructions is recorded by the homomorphism
\[
\CC: \cF(V) \to \cL(V)
\]
defined by prescribing
\[
\Eu_W \mapsto (-1)^{\dim W} T^*_W X
\]
for all closed subvarieties $W$ of $V$. 
In fact, $\CC$ is an {\em isomorphism:\/} it simply matches a basis of $\cF(V)$ with a 
basis of $\cL(V)$.

\begin{defin}\label{def:CC}
The {\em characteristic cycle\/} $\CC(\alpha)$ of a constructible function $\alpha$ on $V$ 
is the image of $\alpha$ in $\cL(V)$ under this isomorphism. 
\qede\end{defin}

Likewise, the formula for Chern-Mather classes motivates the introduction of a homomorphism
\[
c_*:\cL(V) \to A_*(V)
\]
defined on generators by
\[
T^*_W X \mapsto (-1)^{\dim W} \cma(W)\saf.
\]
Then the composition $c_*\circ \CC:\cF(V) \to A_*(V)$ agrees with the value at $V$ 
of MacPherson's natural transformation $\cF \leadsto A_*$, where $\cF$ is taken as 
a functor with push-forward defined by Euler characteristics of fibers; 
see~\cite{MR0361141} and~\cite[Example 19.1.7]{85k:14004}. 
For instance, the {\em Chern-Schwartz-MacPherson class\/} of a (possibly singular)
algebraic variety $V$ is the image $c_*(\CC(\one_V))\in A_*V$ of the characteristic 
cycle in~$\cL(V)$ of the constant function $\one_V$.
Sabbah provides an alternative proof of the naturality of this assignment, which is
the main result of~\cite{MR0361141}, by defining a covariant {\em push-forward\/}
\[
\varphi_*: \cL(V') \to \cL(V'')
\]
for every proper map $\varphi: V' \to V''$, making $\cL$ into a functor, in such a way 
that the above homomorphisms define natural transformations
\[
\cF \leadsto \cL\quad, \quad \cL \leadsto A_*
\]
whose composition agrees with MacPherson's natural transformation.

We are interested in explicit formulas for this {\em Lagrangian push-forward.\/}
After embedding~$V''$ in a nonsingular variety $X$ and replacing $V'$ by a resolution $Y$,
we can reduce to the case of a proper morphism $f:Y \to X$ of nonsingular varieties. 
We are specifically interested in the image $f_*(T^*_YY)$ of the zero-section in this
situation. 

Theorem~\ref{thm:mainthm} will be a consequence of the following result. We recall the 
definition of `trim' given in the introduction.

\begin{defin}\label{def:contained}
Let $\pi:Y \to S$ be a proper birational morphism of varieties, with $Y$ nonsingular.
For $0\le d\le \dim Y$, let $Y_d$ denote the locus where the rank of the differential
$d\pi$ equals $d$. Then $\pi$ is a {\em trim resolution\/} if $\dim Y_d<d$
for all $0\le d<\dim Y$.
\qede\end{defin}

\begin{prop}\label{prop:Sabpf}
Let $f: Y\to X$ be a proper morphism of nonsingular varieties, such that $Y \to f(Y)$ is
a trim resolution. Then $f_* (T^*_Y Y)=T^*_{f(Y)} X$.
\end{prop}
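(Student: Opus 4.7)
The plan is to apply Sabbah's Lagrangian push-forward via the standard correspondence
\[
T^*Y\xleftarrow{\rho}f^*T^*X\xrightarrow{\omega}T^*X,
\]
where $\rho(y,\xi)=(y,\xi\circ df_y)$ is the dual of $df$ and $\omega(y,\xi)=(f(y),\xi)$ is the natural projection. The scheme-theoretic inverse image of the zero section $T^*_YY$ under $\rho$ is the kernel subscheme $K:=\ker(df^*)\subseteq f^*T^*X$, and $f_*[T^*_YY]$ is computed as $\omega_*$ of the $\dim X$-dimensional part of $[K]$. Thus the two tasks are to pin down the $\dim X$-dimensional components of $K$, and then to push them forward through $\omega$.

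For the first task, I would stratify $Y$ by the rank loci $Y_d$ of $df$. Since the fiber of $K\to Y$ over a point of $Y_d$ has dimension $\dim X-d$, the restriction $K|_{Y_d}$ has total dimension $\dim Y_d+\dim X-d$. On the open stratum $Y^\circ:=Y_{\dim Y}$ (nonempty because $f$ is birational onto $S:=f(Y)$), this gives exactly $\dim X$, and $K|_{Y^\circ}$ is a genuine rank-$(\dim X-\dim Y)$ subbundle of $f^*T^*X|_{Y^\circ}$, hence reduced and irreducible; let $K_0$ denote its closure in $K$. The trim hypothesis enters precisely here: for each $d<\dim Y$ the bound $\dim Y_d<d$ yields
\[
\dim K|_{Y_d}=\dim Y_d+\dim X-d<\dim X,
\]
ruling out any other $\dim X$-dimensional component of $K$ supported over the locus where $df$ drops rank. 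Combined with the fact that $K_0$ is generically reduced of generic multiplicity $1$, this gives $[K]_{\dim X}=[K_0]$.

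For the second task, I would use that $f|_{Y^\circ}$ is an immersion of maximal rank that is birational onto $S$, so it restricts to an isomorphism onto a dense open $S^\circ\subseteq S$ (contained in the smooth locus of $S$). Over $Y^\circ$ a point of $K$ is a pair $(y,\xi)$ with $\xi$ annihilating $df_y(T_yY)=T_{f(y)}S^\circ$, and $\omega$ sends it to the corresponding point of $T^*_{S^\circ}X|_{S^\circ}$, giving an isomorphism $K|_{Y^\circ}\xrightarrow{\sim}T^*_{S^\circ}X|_{S^\circ}$. Passing to closures, $\omega|_{K_0}$ becomes a birational morphism $K_0\to T^*_SX$, so $\omega_*[K_0]=[T^*_{f(Y)}X]$, finishing the proof.

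The main obstacle is the dimension count in the middle step. The trim condition is calibrated to be exactly the pointwise-on-$Y$ bound on the rank stratification of the source that forces $\ker(df^*)$ to carry no extraneous $\dim X$-dimensional components over the exceptional locus of $f$; a straight smallness assumption, phrased on strata of the base, does not by itself deliver this.
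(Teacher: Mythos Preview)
Your approach is correct and at bottom identical to the paper's: both rest on the same dimension count $\dim Y_d+(\dim X-d)<\dim X$ for $d<\dim Y$, which is exactly the trim hypothesis. The organization differs slightly. The paper factors $f$ through its graph, identifies the relevant intersection with your $K$, and then argues \emph{on the target side}: for each candidate proper subvariety $Z\subsetneq f(Y)$ it forms the ``microlocal fiber'' $F_{Z,z,\xi}$ and the family $\cF$ over $W^\circ=f^{-1}(Z^\circ)$, then bounds $\dim\cF<\dim X$ by intersecting $W^\circ$ with the rank loci $Y_d$. You instead work \emph{on the source side}: you bound $\dim K$ directly via the stratification $Y=\bigcup_d Y_d$, conclude that $K_0$ is the unique top-dimensional component, and push it forward birationally. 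Your route is a bit leaner, since it avoids singling out each $Z$ and the auxiliary $\cF$; the paper's route makes the connection to the microlocal-fiber language of \cite{MR1084458} explicit, which may be useful in applications. One small point to tighten: when you assert $f|_{Y^\circ}$ is an isomorphism onto a dense open $S^\circ$, you should first shrink $Y^\circ$ to the locus where $\pi$ is already an isomorphism and $S$ is smooth (both open dense conditions), since maximal rank of $df$ alone does not force injectivity.
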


\begin{remark}\label{rem:YtoS}
Let $\pi: Y\to S$ be a proper surjective morphism, with $Y$ nonsingular, and 
$\iota:S\hookrightarrow X$ a closed embedding, with $X$ nonsingular; and let 
$f=\iota\circ \pi: Y \to X$ be the composition. 
As explained above, $T^*_{f(Y)} X=T^*_S X$ may be viewed as an element of $\cL(S)$;  
Proposition~\ref{prop:Sabpf} states that if $\pi$ is trim, then this is the image $\pi_*(T^*_YY)$ 
under $\pi_*: \cL(Y)\to \cL(S)$.
\qede\end{remark}

The rest of this section is devoted to the proof of Proposition~\ref{prop:Sabpf}.\smallskip

{\em A priori,\/} $f_*(T^*_Y Y)$ is an integer linear combination of conormal cycles
$\sum_Z m_Z T^*_ZX$, with $Z\subseteq f(Y)$. One of the summands is $T^*_{f(Y)}X$.
The task is to show that if $Y\to f(Y)$ is trim and $Z\subsetneq f(Y)$ is a {\em proper\/} 
subvariety of $f(Y)$, then the coefficient of $T^*_Z X$ in $f_* (T^*_Y Y)$ is~$0$.
Following Sabbah, we write $f$ as the composition
\[
\xymatrix{
Y \ar[r]^-\gamma & Y\times X \ar[r]^-\rho & X
}
\]
where $\gamma$ is the graph of $f$ and $\rho$ is the (smooth) projection.
Let $p: \rho^*(T^* X) \to T^* X$ be the natural mophism. Then  (cf.~\cite[\S2]{MR804052})
the components $T^*_ZX$ appearing in the decomposition of $f_*(T^*_YY)$ are the
irreducible components of the image
\begin{equation}\label{eq:pint}
p \left(\rho^*(T^* X) \cap T^*_{\gamma(Y)}(Y\times X)\right)
\end{equation}
where we view both $\rho^*(T^* X)$ and $T^*_{\gamma(Y)}(Y\times X)$
as subschemes of $T^* (Y\times X)$. 
Let $T^*_ZX$ be one such component, and let $z$ be a general point of $Z$
and $\xi$ a general covector in the fiber $(T^*_ZX)_z$. For $(z,\xi)$ to be in
the image~\eqref{eq:pint}, there must be a point $(y,z)\in \gamma(Y)$ mapping to 
$z$, such that 
$
(0,\xi)\in T^*_y Y\oplus T^*_z X\cong T^*_{(y,z)} (Y\times X)
$
vanishes on $T_{(y,z)}\gamma(Y)$. 

Now, $(y,z)\in \gamma(Y)$ if and only if $y\in f^{-1}(z)$, while $(0,\xi)$ vanishes on 
$T_{(y,z)}\gamma(Y)$ if and only if $\xi$ vanishes on $df(T_yY)$, if and only if
$f^*(\xi)$ vanishes on $T_yY$.
Therefore, $T^*_Z X$ is a component of~\eqref{eq:pint} if and only if for a general
$z\in Z$ and general $\xi\in (T^*_ZX)_z$, the {\em microlocal fiber\/}
(cf.~\cite[Definition~1.2]{MR1084458})
\[
F_{Z,z,\xi} := \text{(zero-scheme of $f^*(\xi)$)}\subseteq f^{-1}(z)
\]
is nonempty. In order to prove Proposition~\ref{prop:Sabpf}, it suffices to prove that for every 
proper subvariety $Z$ of $f(Y)$, the microlocal fiber $F=F_{Z,z,\xi}$ is empty for general 
$z\in Z$ and $\xi\in (T^*_Z X)_z$.

Now let $Z$ be a subvariety of $X$ and let $Z^\circ$ be its nonsingular part. 
Let $W^\circ=f^{-1}(Z^\circ)$ and consider the fiber product
\[
W^\circ \times_{Z^\circ} T^*_{Z^\circ} X\saf,
\]
that is, the pull-back of the conormal bundle $T^*_{Z^\circ} X$ to $W^\circ$. We denote
points of this pull-back by pairs $(w,\xi)$, where $w\in W^\circ$ and 
$\xi\in (T^*_{Z^\circ} X)_{f(w)}$ is a conormal vector to $Z^\circ$ at $f(w)$. 
Note that we have morphisms
\[
\xymatrix{
W^\circ \times_{Z^\circ} T^*_{Z^\circ} X \ar@{^(->}[r] & f^*T^* X|_{W^\circ} 
\ar[r] & T^* Y|_{W^\circ}\saf;
}
\]
we let $\cF$ be the zero-scheme of this composition. Set-theoretically,
\[
\cF=\{(w,\xi)\,\text{s.t.}\, \xi|_{df(T_w Y)} \equiv 0\}\saf.
\]
By construction we have projections $\cF \to W^\circ$, $\cF \to T^*_{Z^\circ} X$.
The microlocal fiber $F$ is naturally identified with the general fiber of the
projection $\cF \to T^*_{Z^\circ} X$.
Therefore, in order to prove Proposition~\ref{prop:Sabpf}, it suffices to show that
if $Z$ is a proper subvariety of $f(Y)$, then $\dim \cF<\dim (T^*_{Z^\circ} X)=\dim X$.

We will evaluate $\dim \cF$ by considering the projection~$\cF \to W^\circ$.

For any $y\in Y$, let $d_y$ be the rank of $df$ at $y$, that is, the dimension 
$\dim(df (T_yY))$ of the image of $T_yY$ in $T_{f(y)}X$. 
For $w\in W^\circ$, $df (T_wY)$ determines the subspace
\[
T_w:=(df (T_wY) + T_{f(w)}Z^\circ)/T_{f(w)}Z^\circ
\]
of the normal space $(TX/TZ^\circ)_{f(w)}$, with dimension $\ge d_w - \dim Z$.
The condition that $\xi\in (T^*_{Z^\circ} X)_{f(w)}=(TX/TZ^\circ)_{f(w)}^*$ 
vanishes along $df (T_wY)$
is equivalent to the condition that it vanishes along~$T_w$; hence
\begin{equation}\label{eq:dimfib}
\dim\{ \xi\in (T^*_{Z^\circ} X)_{f(w)} \,\text{s.t.}\, \xi |_{df(T_w Y)} \equiv 0\}
=(\dim X-\dim Z)-\dim T_w \le \dim X - d_w\saf.
\end{equation}
This is the dimension of the fiber over $w$ of the projection $\cF \to W^\circ$.

Next, for $d\ge 0$ we let
\[
W_d:=Y_d\cap W^\circ = \{w\in W^\circ \,\text{s.t.}\, d_w=d\}
\]
and observe that $W^\circ=\cup_{0\le d\le \dim Y} W_d$. We have the bound
$\dim W_d \le \min (\dim Y_d,\dim W^\circ)$; in particular 
\begin{itemize}
\item
$\dim W_{\dim Y} \le \dim W^\circ < \dim Y$, since $Z$ is assumed to be a proper subvariety
of~$f(Y)$; and
\item
for $0\le d < \dim Y$, we have $\dim W_d \le \dim Y_d<d$ as $Y \to f(Y)$ is assumed to be
trim.
\end{itemize}
Therefore, $\dim W_d < d$ for all $d$. It follows that $\dim \cF < (\dim X-d)+d = \dim X$,
and this concludes the proof of Proposition~\ref{prop:Sabpf}.
\qed


\section{Proof of Theorem~\ref{thm:mainthm}}\label{thmpf}

Let $S$ be a variety admitting a trim resolution $\pi: Y\to S$; let $\iota: S\hookrightarrow X$
be an embedding in a nonsingular variety, and let $f=\iota\circ \pi:Y \to X$ be the composition.
By the covariance of the Lagrangian push-forward and Proposition~\ref{prop:Sabpf},
\[
CC(f_*(\one_Y)) =f_*(CC(\one_Y))= f_*((-1)^{\dim Y}T^*_YY)=(-1)^{\dim S} T^*_SX
=CC(\Eu_S)
\]
where we used the fact that $Y$ is nonsingular (so that $\one_Y=\Eu_Y$) and
that $\dim S=\dim Y$. Since $CC$ is an isomorphism, this implies
\[
f_*(\one_Y) = \Eu_S\saf,
\]
and by definition of push-forward of constructible functions, this means that
\[
\Eu_S(s)=\chi(f^{-1}(s))\saf,
\]
where $\chi$ denotes the Euler characteristic.
Since $f^{-1}(s)=\pi^{-1}(s)$ for $s\in S$, this proves the first assertion of 
Theorem~\ref{thm:mainthm}.

The second assertion follows from the first by applying MacPherson's natural transformation.
 Alternatively, we can use the covariance of $\cL\leadsto A_*$
 (cf.~Remark~\ref{rem:YtoS}):
 \begin{align*}
\cma(S) &=c_*((-1)^{\dim S} T^*_SX)= c_*( \pi_*((-1)^{\dim S} T^*_YY))
= \pi_*(c_*((-1)^{\dim Y}T^*_YY)) \\
&=\pi_*(\cma(Y)) \saf;
\end{align*}
since $Y$ is nonsingular, $\cma(Y)=c(TY)\cap [Y]$, so that 
$\cstr(S):= \pi_*(c(TY)\cap [Y])=\cma(S)$, concluding the proof.

Concerning the third point in Theorem~\ref{thm:mainthm}, recall that the characteristic 
cycle of a complex of sheaves on a variety $S$ embedded in a nonsingular variety $X$ 
is the characteristic cycle (in $\cL(X)$) of its stalk Euler characteristic. (This may be
adopted as the definition of the characteristic cycle, or as an application of the local
index formula, cf.~\cite[Theorem~4.3.25(i)]{MR2050072}.)
As explained in~\S\ref{Lagrapf}, the characteristic cycle may be viewed as an element 
of~$\cL(S)$. The third point of theorem~\ref{thm:mainthm} is a statement about the 
characteristic cycle of the intersection cohomology sheaf $\IC^\bullet_S$ of $S$.

\begin{prop}\label{prop:consma}
Let $\pi: Y \to S$ be a trim proper birational map. Then $\pi$ is small.
\end{prop}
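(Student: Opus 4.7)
The plan is to unwind the smallness inequality $2d(Z)<\codim_Z S$ for an arbitrary proper stratum $Z$ of $S$, using the trim bound $\dim Y_{d'}<d'$ on the rank-stratification of $Y$. Fix any stratification of $S$ with respect to which $\pi$ is stratified (for instance, the one given by the fiber-dimension function), and let $Z$ be a proper stratum with common fiber dimension $d(Z)$. If $d(Z)=0$ the smallness inequality is automatic, since $\codim_Z S>0$ for every proper stratum; so assume $d(Z)>0$.

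The key step is an elementary rank bound along fibers. For any $y\in\pi^{-1}(Z)$, the map $\pi$ is constant on the fiber $F=\pi^{-1}(\pi(y))$, hence $T_yF\subseteq \ker(d\pi_y)$; combined with $\dim T_yF\ge \dim F=d(Z)$, this gives
\[
\rk(d\pi_y)=\dim Y-\dim \ker(d\pi_y)\le \dim Y-d(Z).
\]
Setting $d:=\dim Y-d(Z)$, this shows $\pi^{-1}(Z)\subseteq \bigcup_{d'\le d}Y_{d'}$.

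Since $d(Z)>0$ we have $d<\dim Y$, so the trim hypothesis applies to every term of this union: $\dim Y_{d'}<d'\le d$ for all $d'\le d$, hence $\dim \pi^{-1}(Z)<d$. Using $\dim Y=\dim S$ (as $\pi$ is birational) and the stratified identity $\dim \pi^{-1}(Z)=\dim Z+d(Z)$, we obtain $\dim Z+d(Z)<\dim Y-d(Z)$, i.e.,
\[
2\saf d(Z)<\dim S-\dim Z=\codim_Z S,
\]
which is the smallness condition. No serious obstacle is anticipated: the trim hypothesis is tailored so that the elementary inequality $\rk(d\pi_y)\le \dim Y-d(Z)$ translates directly into the smallness bound, and the whole proof amounts to a short computation once the correct stratification is in place.
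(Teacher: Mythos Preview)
Your argument is essentially the paper's own proof: bound the rank of $d\pi$ along $\pi^{-1}(Z)$ by $\dim Y-d(Z)$ using the fiber direction, then invoke the trim inequality to bound $\dim\pi^{-1}(Z)$, and rearrange. The only imprecision is the claim that $\dim T_yF\ge d(Z)$ holds for \emph{every} $y\in\pi^{-1}(Z)$: this need not be true at points where the local fiber dimension is smaller than $d(Z)$, but (as the paper does) it suffices to take $y$ general in a component of $\pi^{-1}(Z)$ of maximal dimension, which is all that is needed to bound $\dim\pi^{-1}(Z)$.
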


\begin{proof}
Let $\pi: Y\to S$ be a trim proper birational map, let $Z$ be a proper stratum 
of an adapted stratification of $S$, and let $W=\pi^{-1}(Z)$. 
Also, let $d(Z)$ be the common dimension of $\pi^{-1}(z)$ for $z\in Z$;
thus, $\dim W=\dim Z+d(Z)$.  
The differential vanishes along directions tangent to the fibers, therefore 
$\dim \ker d_w\pi \ge d(Z)$ for a general $w$ in a component of maximal dimension 
in $W$. Equivalently, $\rk d_w\pi \le \dim Y-d(Z)$. Since $\pi$~is trim and $Z$ is a 
proper stratum, $\dim W<\rk d_wf$; hence $\dim W< \dim Y-d(Z)$.
Therefore,
\[
\dim Z + d(Z) < \dim Y - d(Z)
\]
i.e., 
\[
2 d(Z) < \dim Y-\dim Z=\dim S-\dim Z = \codim_SZ\saf.
\]
Since this inequality holds for all proper strata $Z$ of $S$, $\pi$ is small.
\end{proof}

Now let $S$ be a variety admitting a trim resolution $\pi: Y\to S$.

By Proposition~\ref{prop:consma}, $\pi$ is small. It follows (cf.~\cite[\S6.2]{MR696691})
that the intersection cohomology sheaf $\IC^\bullet_S$ is the direct image of the intersection 
cohomology sheaf of $Y$. Since $Y$ is nonsingular, the latter is a shift of the constant
sheaf. Therefore,
\[
\IC^\bullet_S = R\pi_* \Qbb_Y[\dim Y]\saf.
\]
(Concerning the shift, we follow the modern convention as in e.g., 
\cite[Remark~4.2.4]{MR2525735}.)
The following formula for the stalk Euler characteristic of $\IC^\bullet_S$ at $z\in S$
is a consequence of standard properties of (derived) direct images:
\[
\chi_z(\IC^\bullet_S) = 
\chi_z(R\pi_* \Qbb_Y[\dim Y]) =
(-1)^{\dim S} \sum_i (-1)^i \dim H^i(\pi^{-1}(z);\Qbb) 
= (-1)^{\dim S}\chi(\pi^{-1}(z))\saf.
\]
By the first assertion in Theorem~\ref{thm:mainthm}, this implies 
\[
\chi_z(\IC^\bullet_S) = (-1)^{\dim S}\Eu_S(z)\saf.
\]
Now embed $S$ in a nonsingular variety $X$.  
The characteristic cycle of $\IC^\bullet_S$ is
\[
CC(\chi_z(\IC^\bullet_S)) = CC((-1)^{\dim S}\Eu_S(z)) = T^*_SX\saf,
\]
and this verifies it is irreducible, completing the proof of 
Theorem~\ref{thm:mainthm}.


\section{Further remarks}\label{Furrem}

1. In this note we have considered the notion of `trim' only for proper birational morphisms  
because that is the case relevant to our application in Theorem~\ref{thm:mainthm}. 
It could be taken as a template for more general proper maps, and it would be interesting
to investigate corresponding generalizations of the main result. For instance, if $f: Y\to X$
is generically finite of degree $m$ onto its image and satisfies the same dimensional 
constraints $\dim Y_d<d$ for $d<\dim Y$, then $f_*(T^*_YY)=m T^*_{f(Y)}X$, with the 
same argument given for Proposition~\ref{prop:Sabpf}.

Similarly, we have restricted attention to {\em complex\/} algebraic varieties to align with some
standard literature, but the results should extend without change to algebraically closed
fields of characteristic $0$; see~\cite{MR1063344} for a treatment of the Lagrangian 
functor in that generality. Also, the results should hold equivariantly; 
see~\cite[\S3.2]{MR4688156} for the relevant equivariant formalism of characteristic
cycles and characteristic classes.
\smallskip

2. The relation between the notion of `trim' and `small' may be clarified by the following
observation.

---A proper birational morphism $\pi: Y\to S$ is small if and only if the fiber product 
$Y\times_S Y$ has a unique component of dimension $\dim Y$ (cf.~e.g., 
\cite[Remark 2.1.2]{MR2067464}). 

---A proper birational morphism $\pi: Y\to S$ is trim if and only if the linear fiber space
associated with the sheaf of differentials $\Omega_{Y|S}$ has a unique component
of dimension $\dim Y$.

Indeed, for all $y\in Y$ we have the exact sequence (tensor~\cite[II.8.11]{MR0463157} by
the residue field $\Cbb(y)$)
\[
\xymatrix{
T^*_{\pi(y)} S \ar[r] & T^*_yY \ar[r] & 
\Omega_{Y|S} \otimes \Cbb(y) \ar[r] & 0
}\saf,
\]
so the `trim' condition is equivalent to the requirement that the {\em co\/}dimension of the locus
where $\dim \Omega_{Y|S} \otimes \Cbb(y)$ equals $d$ is larger than $d$ for all
$d>0$.\smallskip

3. Let $\pi:Y\to S$ be a small stratified map such that the restriction $\pi^{-1}(Z)\to Z$ is a 
smooth morphism for all strata $Z$ of $S$.  
(Here, $\pi^{-1}(Z)$ is the scheme-theoretic inverse image.)
Then $\pi$ is trim.

Indeed, we claim that under this hypothesis, the rank of the differential along $W=\pi^{-1}(Z)$ 
equals $\dim Y-d(Z)$.
To verify this, consider the fiber square
\[
\xymatrix{
W \ar@{^(->}[r]^j \ar[d]_\rho & Y \ar[d]^\pi \\
Z \ar@{^(->}[r]^i & S
}
\]
and let $w\in W$, $z:=\pi(w)\in Z$.
Since the ideal sheaf $\cI_Z$ of $Z$ in $X$ generates the ideal sheaf $\cI_W$ of $W$ in $Y$, 
there is a surjection $\rho^* (\cI_Z/\cI_Z^2) \twoheadrightarrow \cI_W/\cI_W^2$. Chasing the 
diagram
\[
\xymatrix{
 \rho^* (\cI_Z/\cI_Z^2) \ar[r] \ar@{->>}[d] & j^*\pi^* \Omega_X \ar[r] \ar[d] & 
 \rho^* \Omega_Z \ar[r] \ar[d] & 0 \\
 \cI_W/\cI_W^2 \ar[r] & j^* \Omega_Y \ar[r] & \Omega_W \ar[r] & 0 
 }
 \]
shows that the cokernels of the two right-most vertical maps are isomorphic. 
Tensoring by the residue field of $w\in W$ preserves cokernels, and dualizing
shows that the kernels of the vertical maps in
\[
\xymatrix{
T_wW \ar@{^(->}[r] \ar@{->>}[d]_{\rho_*} & T_wY \ar[d]^{\pi_*} \\ 
T_zZ \ar@{^(->}[r] & T_zX
}
\]
are equal. The induced map $\rho_*: T_wW \to T_zZ$ 
is surjective, since $\rho: W\to Z$ is smooth by assumption. 
It follows that $\ker \pi_*=\ker \rho_*$ has dimension $d(Z)$, and
this proves our claim.

The smallness condition for a proper stratum $Z$ is $2 d(Z) < \dim Y-\dim Z$, which is
then equivalent to
\[
\dim W = \dim Z + d(z) < \dim Y- d(Z)=\rk d_w\pi
\]
for all $w\in W$. For all $d<\dim Y$, the locus $Y_d$ in Definition~\ref{def:contained} is a 
(finite) union of inverse images of proper strata, 
so this shows that $\dim Y_d<d$ for $d<\dim Y$ and proves that $\pi$ is trim.


\end{document}